\def\ig#1#2#3#4{\begin{figure}[!ht]\begin{center}%
\includegraphics[height=#2\textheight]{#1.pdf}\caption{#4}\label{#3}%
\end{center}\end{figure}}
\def\thtext#1{
  \catcode`@=11
  \gdef\@thmcountersep{. #1}
  \catcode`@=12
}
\def\threst{
  \catcode`@=11
  \gdef\@thmcountersep{.}
  \catcode`@=12
}
\theoremstyle{plain}
\newtheorem{thm}{Theorem}[section]
\newtheorem{ass}[thm]{Assertion}
\newtheorem{lem}[thm]{Lemma}
\theoremstyle{definition}
\newtheorem{rk}[thm]{Remark}
 \def\.{.\spacefactor\@m}
\def\N{{\mathbb N}}
\def\R{\mathbb R}
\def\Z{{\mathbb Z}}
\def\g{\gamma}
\def\om{\omega}
\def\Om{\Omega}
\def\l{\lambda}
\def\L{\Lambda}
\def\r{\rho}
\def\0{\emptyset}
\def\:{\colon}
\def\<{\langle}
\def\>{\rangle}
\def\[{\llbracket}
\def\]{\rrbracket}
\def\rom#1{\emph{#1}}
\def\({\rom(}
\def\){\rom)}
\def\ss{\subset}
\def\x{\times}
\def\mpf{\operatorname{mpf}}
\def\mf{\operatorname{mf}}
\def\cG{{\mathcal G}}
\def\cM{{\mathcal M}}
\def\cP{{\mathcal P}}
\def\sss#1{{\scriptscriptstyle #1}}
\begin{document}
\title{Dual Linear Programming Problem and One-Dimensional Gromov Minimal Fillings of Finite Metric Spaces}
\author{A.\,O.~Ivanov, A.\,A.~Tuzhilin}
\date{}
\maketitle

\begin{abstract}
The present paper is devoted to studying of minimal parametric fillings of finite metric spaces (a version of optimal connection problem) by methods of Linear Programming. The estimate on the multiplicity of multi-tours appearing in the formula of weight of minimal fillings is improved, an alternative proof of this formula is obtained, and also explicit formulas for finite spaces consisting of $5$ and $6$ points are derived.
\end{abstract}

\section*{Introduction}
\markright{\thesection.~Introduction}
The range of Optimal Connections Problem is rather wide. Informally speaking, a given subset $M$ (usually a finite one) of a metric space $X$ has to be connected by some network (a connected graph) of an optimal length. As the concept of optimality, so as the class of admissible graphs could be defined in different ways. For example, the \emph{Steiner Problem\/} consists in finding a connected graph having the least possible length and such that its vertex set is contained in $X$ and contains the initial set $M$ (in the latter case we say that this graph \emph{connects $M$}). The length of the graph is defined as the sum of lengths of all its edges, and the length of an edge is equal to the distance in $X$ between its vertices. A solution to this problem is referred as a \emph{shortest network\/} on $M$.

The concept of minimal filling of a finite metric space $M$ appeared recently in paper~\cite{IvaTuzhMF} by the authors as a generalisation of the Steiner Problem and a concept of minimal filling of Riemannian manifold in the sense of  M.~Gromov~\cite{Gromov}. Recall, see also Section~\ref{subsec:fill}, that a tree $G=(V,E)$ with an arbitrary weight function $\om$ given on its edge set is called a \emph{filling of the type $G$ of the space $M$}, if $M\ss V$ as a subset, and for any pair of elements from $M$ the weight of the path in $G$ connecting those vertices (i.e., the sun of $\om(e)$ over all edges $e$ belonging to the path) is greater than or equal to the distance between these points in the metric space $M$. A filling of the type $G$ having the least possible weight is called a \emph{minimal parametric filling of the type $G$}. Passing to the infimum over all possible types $G$ leads to the concept of \emph{minimal filling}. Minimal Fillings Theory of finite metric spaces is actively evolved, see for example~\cite{Trudy}, \cite{Eremin}, \cite{Ovs}, and~\cite{Bed}.  It turns out to be related not only with geometry of finite metric spaces, but with structure of optimal networks also. For example, the weight of a minimal filling of a set $M\ss X$ considered as a finite metric space with the metric induced from  $X$ gives an exact lower bound for the length of shortest network connecting $M$ in $X$.

As it was already noticed in~\cite{IvaTuzhMF} by the authors, the problem of finding a minimal parametric filling of a given type can be reduced to a linear programming, but this fact was used in the proof of the corresponding existence theorem only.  However Linear Programming is one of the most developed and widely used branches of general optimization theory, it has many useful methods and approaches, and  one of them is duality principal. In the case of minimal filling the passing to the dual problem, see below, turns out to be rather effective.

Roughly speaking, by each binary tree $G$ with $n$ vertices of degree $1$ we construct a convex polyhedron of dimension $(n-2)(n-3)/2$ in the space of dimension $n(n-1)/2$, and finding minimal parametric filling of the type $G$ is reduced to maximization on this polyhedron of a linear function which is defined in terms of the metric on $M$. The maximal value is achieved at some vertex of the polyhedron, and the problem can be reduced to checking of its vertex set. The maximal value of the function turns out to be a rational linear combination of some distances on $M$, that defines a multi-tour of $G$ and its multiplicity.

In the present paper this method is used to obtain an alternative proof of the non-trivial formula of the weight of a parametric minimal filling obtained by A.~Eremin in~\cite{Eremin} in terms of multi-tours by a rater tricky technique, see below, and essentially improve the estimate on the multiplicity of the multi-tours standing in the formula. We also derived explicit formulas for the weight of a minimal parametric fillings of any fixed type for five-point and six-point metric spaces  (the case of five-point metric spaces was considered in~\cite{Bed}, and the case of six-point spaces is a new one) and, in fact, we give an algorithm permitting to obtain such formulas for any finite metric space and any fixed type of a filling.

\section{Preliminaries}
\markright{\thesection.~Preliminaries}
In the present Section we introduce necessary notations and concepts and recall basic facts from Minimal Fillings of Finite Metric Spaces and Linear Programming Theories. More detailed information on the minimal fillings can be found in~\cite{IvaTuzhMF}, and in a review~\cite{IT_Musin}. From a huge number of books on Linear Programming we use a more geometrical book~\cite{VasIvanits}.

\subsection{Minimal Fillings of Finite Metric Spaces}
\label{subsec:fill}

Let $M$ be an arbitrary finite set and let $G=(V,E)$ be some connected simple graph with a vertex set $V$ and an edge set $E$. To be short, the edge of the graph connecting its vertices $u$ and $v$ is denoted by $uv$. We say that $G$ {\em connects $M$}, if $M\subset V$. In this case we also say that $M$ is a {\em boundary of the graph $G$}. In what follows we always assume that each graph has some fixed boundary which could be empty. A \emph{tree\/} is a connected graph without cycles, whose boundary contains all its vertices of degree $1$ and $2$. A tree is said to be  \emph{binary}, if the degrees of its vertices are equal to $1$ or to $3$, and its boundary coincides with the set of all its vertices of degree $1$.

A \emph{cut of a graph $G$} is an arbitrary partition of its vertex set into two non-empty non-intersecting subsets. An edge is called a \emph{cut edge of the cut $V=V_1\sqcup V_2$},  if one of its vertices belongs to $V_1$, and another vertex belongs to $V_2$. For any connected graph the set of cut edges is non-empty for any cut. To each family of cuts the so-called \emph{cut matrix} corresponds, whose rows are enumerated by cuts of the family, whose columns are enumerated by the edges of the graph, and whose element standing at the $j$th position in the $i$th row equals $1$, if the $j$th edge is a cut edge of the $i$th cut, and equals $0$ otherwise.

Let $G$ be an arbitrary tree with a boundary $M$ consisting of all the vertices of $G$ of degree  $1$ and $2$, and  let $e\in E$ be an arbitrary edge of the tree $G$. Elimination of the edge $e$ partitions the tree $G$ into two connected components that are denoted by $G_1$ and $G_2$.Put $M_i = M\cap G_i$, $i=1,\,2$. By $\cP_G(e) = \{M_1,\,M_2\}$ we denote the resulting partition of the set $M$. In particular, each edge of the tree $G$ generates a cut of the complete graph  $K(M)$ with the vertex set $M$.

Now let $\cM=(M,\rho)$ be a finite pseudo-metric space (recall that in a pseudo-metric space the distances between distinct points could be equal to zero), $G=(V,E)$ be a connected graph connecting $M$, and $\om\:E\to\R_+$ be a mapping of its edge set into non-negative reals ,that is usually referred as a {\em weight function}. The pair $\cG=(G,\om)$ is called a {\em weighted graph}. The {\em weight  $\om(\cG)$} of a weighted graph $\cG$ is the sum of weights $\om(e)$ of all edges $e$ of the graph. The function $\om$ generates a pseudo-metric $d_\om$ on $V$ as follows: the $d_\om$-distance between vertices of the graph $\cG$ is defined as the least weight of the walks connecting those vertices.  If for any pair of points $p$ and $q$ from  $M$ the inequality $\rho(p,q)\le d_\om(p,q)$ holds, then the weighted graph $\cG$ is called a {\em filling\/} of the space $\cM$, and the graph $G$ is called the {\em type\/} of this filling. The value $\mf(\cM)=\inf\om(\cG)$, where the infimum is taken over all fillings $\cG$ of the space $\cM$ is called the {\em weight of minimal filling}, and a filling $\cG$ such that $\om(\cG)=\mf(\cM)$ is called a  {\em minimal filling}. If one minimizes the weight of fillings over fillings of a fixed type, then one obtains a {\em minimal parametric fillings}, whose weight is denoted by $\mpf(\cM,G)$. In paper~\cite{IvaTuzhMF} it is shown that it suffices to restrict the search of minimal fillings by the fillings, whose type is a tree without vertices of degree $2$, and such that its set of vertices of degree $1$ coincides with $M$.

It turns out that it is not necessary to define fillings among the graphs with non-negative weight functions only, namely, one can find the minimum over fillings with arbitrary weight functions, not necessary nonnegative ones. Such neighborhoods are referred as {\it generalised\/} ones. It turns out that under such switch to generalized minimal fillings the most of properties of minimal fillings are preserved. Let us pass to formalities, see details in~\cite{NegativeFillings}.

By a {\it generalized weighted graph\/} we call a pair $(G,\om)= (V,E,\om)$, where $\om\colon E\rightarrow \R$ is an arbitrary function. Define $d_{\om}\colon V\times V \rightarrow \mathbb{R}$ as follows: $d_{\om}(u,v)$ equals the least possible weight of the paths  (i.e., walks with pairwise distinct edges) with the ends $u$ and $v$. Generally speaking, the function $d_{\om}$ is not non-negative, and need not satisfy the triangle inequalities.

By a {\it generalized filling of a finite pseudo-metric space $\cM =(M,\r)$} we call a generalized weighted tree $\cG$ connecting~$M$, if for any $u,v\in M$ the inequality $\r(u,v)\leqslant d_{\om}(u,v)$ holds.

\begin{rk}
Of course, in any tree any two vertices are connected by a unique path that simplifies the definition of $d_\om$. The concept of generalized filling can be given for an arbitrary connected graph connecting $M$, see~\cite{NegativeFillings}, but in this paper it suffices to consider the trees.
\end{rk}

The {\it weight $\mpf_{-}(\cM,G)$ of generalized minimal parametric filling of a type $G$ of a space $\cM$} is defined as $\inf \om({\cG})$, where the infimum is taken over all generalized fillings of the space $\cM$ of the given fixed type $G$. Each generalized filling of the type $G$, which the infimum is attained at is called a {\it generalized minimal parametric filling of the type  $G$ of the space $M$}.

The {\it weight $\mf_{-}(\mathcal{M})$ of generalized minimal filling\/} is defined as the value $\inf \mpf_{-}({\cM}, G)$, where the infimum is taken over all trees $G$ connecting $M$. Each generalized filling which the infimum is attained at is called a {\it generalized minimal filling of $M$}.

A remarkable fact is that for any finite pseudo-metric space the weights of minimal filling and of generalized minimal filling are equal to each other, i.e. the minimal weight over the fillings with non-negative weight functions coincides with the minimal weight over the larger set of fillings with arbitrary weight functions. Namely, the following result holds, see~\cite{NegativeFillings}.

\begin{thm}\label{thMFPEqualsMFN}
For any finite pseudo-metric space ${\cM}$ the equality ${\rm mf_{-}}({\cM})={\rm mf}({\cM})$ holds.
\end{thm}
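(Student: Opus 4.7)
The inequality $\mf_{-}(\cM) \le \mf(\cM)$ is immediate, since every ordinary filling is in particular a generalized filling, so the infimum over the strictly larger class of weight functions cannot be larger.

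For the reverse inequality $\mf(\cM) \le \mf_{-}(\cM)$, the plan is to take an arbitrary generalized filling $\cG = (G, \om)$ of $\cM$ and produce an ordinary (non-negatively weighted) filling $\cG' = (G', \om')$ with $\om(\cG') \le \om(\cG)$; passing to infima then gives the result. I would argue by induction on the number of edges $e$ of $G$ with $\om(e) < 0$. Pick such an edge $e = uv$ with $\om(e) = -a < 0$. Removing $e$ splits $G$ into subtrees $G_1 \ni u$ and $G_2 \ni v$ and induces a partition $M = M_1 \sqcup M_2$. The filling condition applied to pairs $(p, q) \in M_1 \times M_2$ gives, for the induced distances in the two subtrees,
\[
d_{\om|_{G_1}}(p, u) + d_{\om|_{G_2}}(v, q) \ge \r(p, q) + a,
\]
i.e.\ a uniform slack $a$ across the cut. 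The natural move is to contract $e$ and use this slack to absorb the resulting increase $+a$ in the total weight by trimming $a$ units of positive weight from $G_1 \cup G_2$ in a way that preserves the filling inequalities for pairs lying wholly in $M_1$ or wholly in $M_2$.

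The hard part is exactly this redistribution: pairs inside one of the $M_i$ may already saturate their filling inequality, so there is no individual positive edge whose weight can obviously be decreased. For this reason the cleanest route, in keeping with the linear-programming viewpoint of the rest of the paper, is to fix the type $G$ and write $\mpf_{-}(\cM, G)$ and $\mpf(\cM, G)$ as two linear programs with identical objective and constraint matrix (indexed by pairs $p, q \in M$ on one side and edges of $G$ on the other), differing only by the sign constraints $\om(e) \ge 0$, and then to establish their coincidence by duality. Concretely, one shows that any optimal dual solution of the non-negative problem (a weight vector $y_{pq} \ge 0$ with every edge loaded by at most~$1$) can be taken so that every edge is loaded by exactly~$1$, which makes it feasible for the dual of the generalized problem and forces the two optima to coincide; the existence of such a fully loaded dual optimum is precisely the multi-tour structure that the paper develops. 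Finally, passing to the infimum over all types $G$ yields $\mf(\cM) \le \mf_{-}(\cM)$ and thus equality.
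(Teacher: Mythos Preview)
The paper does not prove this theorem at all; it is quoted from~\cite{NegativeFillings}. So there is no ``paper's own proof'' to compare against, and your proposal must be judged on its own.

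Your second, LP-duality route contains a genuine error. You propose to fix a type~$G$ and show, via duality, that
\[
\mpf(\cM,G)=\mpf_{-}(\cM,G)
\]
for every tree~$G$, and then pass to the infimum over~$G$. But the paper explicitly warns, in the Remark immediately following the theorem, that this equality \emph{fails} in general: there exist~$\cM$ and~$G$ with $\mpf_{-}(\cM,G)<\mpf(\cM,G)$. Concretely, the dual of the non-negative problem has inequality constraints $\sum_{(i,j):\,e\in\gamma(i,j)}\lambda_{ij}\le 1$ for each edge~$e$, while the dual of the generalized problem has the corresponding equalities. Your claim that ``any optimal dual solution of the non-negative problem \ldots\ can be taken so that every edge is loaded by exactly~$1$'' is precisely what is false: raising some~$\lambda_{ij}$ to saturate an under-loaded edge will typically over-load other edges along the path~$\gamma(i,j)$. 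The multi-tour structure the paper develops lives entirely on the equality polytope~$\Lambda_G$ and says nothing about optima of the relaxed inequality system.

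Your first, combinatorial route (contract a negative edge and redistribute the slack) is the one that can actually succeed, because contraction changes the tree type and this change of type is exactly what is needed to bypass the parametric counterexamples. However, you stop at the hard point without carrying it through: after contracting~$e=uv$ the resulting tree may have a vertex of degree~$>3$ (or a boundary vertex of degree~$>1$), and one has to argue that the slack~$a$ across the cut can be absorbed while preserving \emph{all} filling inequalities, not just the cross ones. That is the substance of the proof in~\cite{NegativeFillings}; as written, your proposal identifies the obstacle but does not overcome it.
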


This result permits essentially simplify the initial problem of minimal fillings finding, because the necessity to verify non-negativity of the weights disappears.

\begin{rk}
A similar statement for minimal parametric fillings is not valid. It is not difficult to construct an example of a space $\cM$ and a tree $G$, such that ${\mpf_{-}}({\cM},G)<{\rm mpf}({\cM},G)$.
\end{rk}

\subsection{Linear Programming}\label{ssec:lp}

Recall the statement of so-called \emph{General  Linear Programming Problem\/} that is abbreviated to \emph{GLPP}. Consider $n$-dimensional linear space that is convenient to represent as $\R^n=\R^{n_1}\times\R^{n_2}$. Write vectors from $\R^n$ in the form  $x=(x_1,x_2)$, where $x_i\in\R^{n_i}$. Let we be given with a linear function $F(x)=\<f_1,x_1\>+\<f_2,x_2\>$, where $f_i\in\R^{n_i}$, and angle brackets stand for the standard scalar product. The problem is: Find the least possible value of the function $F$ (in this context $F$ is referred as \emph{objective function\/}) on the subset $X\subset\R^n$ of the space $\R^n$, where $X$ is defined by a system of linear equations and linear equalities as follows:
$$
X=\big\{x=(x_1,x_2)\mid A_{11}x_1+A_{12}x_2\le b_1,\quad
A_{21}x_1+A_{22}x_2=b_2,\quad x_1\ge0\big\},
$$
where $A_{ij}$  are fixed  $(m_i\times n_j)$ matrices, and  $b_i\in\R^{m_i}$. The set $X$ is called the \emph{solutions space}. If $X$ is not empty, then $X$ is a convex polyhedral subset of the space $\R^n$. In this case we put $F_*=\inf_{x\in X}F(x)$, and if $F_*$ is finite, then $X_*=\{x\in X:F(x)=F_*\}$. The problem is called \emph{solvable}, if $X_*$ is non-empty. In this case each point $x_*\in X_*$ is called a  \emph{solution}. Notice that one can search the greatest possible value of $F$ on $X$ instead of the least one, and one problem can be reduced to the other by changing of the sign of the objective function $F$. The supremum of the values $F(x)$ over $X$ is denoted by $F^*$, and if $F^*$ is finite, then $X^*=\{x\in X:F(x)=F^*\}$.

Consider an important articular case that is referred as  \emph{Canonical Linear Programming Problem}, or \emph{CLLP}. In the above notations the CLLP corresponds to the case  $n_2=m_1=0$, i.e., all the variables are non-negative, and all the remaining constrains are equations (it is well-known that each GLPP can be reduced to some CLLP by a standard trick with addition of new non-negative variables).

The solution set of any CLLP is contained in the positive orthant. Therefore, if CLPP is solvable, then its set of solutions $X$ always contains so-called  \emph{angular\/} or \emph{extreme points\/} of $X$. Recall that a point of $X$ is said to be extreme, if it does not lie inside any interval of non-trivial segment, whose end points belong to $X$. Moreover, in this case the set  $X_*$ is a convex polyhedral set all whose angular points is also angular points of $X$.  From geometrical point of view, these angular points are vertices of the polyhedral set.

Angular points of the solution set $X=\{x\in\R^n\mid x\ge0,\quad Ax=b\}$ of a canonical problem can be easily described (here for brevity $n=n_1$, $b=b_2$, $A=A_{21}$). By $A_j$ we denote the $j$th column of the matrix $A$.

\begin{ass}\label{ass:extreme_points}
Under the above notations, let $A\ne0$, and let $r$ be the rank of the matrix $A$. A point  $x=(x^1,\ldots,x^n)\in\R^n$ is angular, if and only if there exist linear independent columns  $A_{j_1},\ldots,A_{j_r}$ of the matrix $A$, such that
$$
A_{j_1}x^{j_1}+\cdots+A_{j_r}x^{j_r}=b,
$$
and $x^{j_k}\ge 0$, $k=1,\ldots,r$,  and the remaining $x^j$ equal $0$.
\end{ass}

In Linear Programming a duality principal plays an important role. Recall that the \emph{dual problem\/} to GLLP is stated as follows: Find the greatest possible value of the linear function  $H(\l)=-\<b_1,\l_1\>-\<b_2,\l_2\>$, where the variable vectors $\l_i\in\R^{m_i}$ form the vector  $\l=(\l_1,\l_2)$ belonging to the polyhedral domain $\L\ss\R^m$, $m=m_1+m_2$, that is defined by the following system of linear equations and inequalities:
$$
\L=\big\{\l=(\l_1,\l_2)\mid A_{11}^T\l_1+A_{21}^T\l_2\le - f_1,\quad
A_{12}^T\l_1+A_{22}^T\l_2=-f_2,\quad \l_1\ge0\big\}.
$$
As it is well-known, the dual problem to the dual problem is equivalent to the initial one, therefore usually one speaks about \emph{mutual duality}.

In accordance with \emph{Duality Principle}, mutually dual problems of Linear Programming are solvable or non solvable simultaneously, and if the problems are solvable, then $F_*=H^*$, and  $F(x)=F_*=H^*=H(\l)$, iff $x\in X_*$ and $\l\in\L^*$.

\section{Minimal Parametric Fillings and Linear Programming}
\markright{\thesection.~Minimal Parametric Fillings and Linear Programming}

As it is noticed in~\cite{IvaTuzhMF}, the problem of finding a minimal parametric filling can be reduced to a Linear Programming. Indeed, let $(M,\r)$ be an arbitrary finite metric space. Enumerate points from $M=\{p_1,\ldots,p_n\}$ in some a way, and put $d_{ij}=\r(p_i,p_j)$. Further, let $G=(V,E)$ be some tree connecting $M$. Assume that $M$ coincides with the set of vertices of the tree $G$ of degrees $1$ and $2$. Describe weight functions $\om\:E\to\R$ that make the tree $G$ to a generalized filling of the finite metric space $(M,\r)$. For each pair $p_i$, $p_j$ of the boundary vertices there exists unique path $\g(i,j)$ in $G$ connecting them. By definition, a weighted tree $(G,\om)$ is a generalized filling of $(M,\r)$, if and only if
\begin{equation}\label{eq:omega}
\sum_{e\in\g(i,j)}\om(e)\ge d_{ij},\quad\text{for all $1\le i<j\le n$}.
\end{equation}
The weight of this filling equals $\sum_{e\in E}\om(e)$, where $E$ is the edge set of the tree $G$. Thus, to find a minimal parametric filling of the type $G$, it is necessary to find the least value of the linear function $F(\om)=\sum_{e\in E}\om(e)$ on the convex polyhedral subset $\Om_G$ of the space $\R^{|E|}$ defined by the system of linear inequalities~$(\ref{eq:omega})$. Write down this Linear Programming Problem as a GLPP as it is described in Section~\ref{ssec:lp}.

Since we have no restriction on the signs of the variables, then $n_1=0$, and $n_2$ is equal to the number $|E|$ of edges of the tree $G$. The variables forming the vector $x_2$ are the variables $\om(e)$. Enumerate the edges of the tree in some a way and put $E=\{e_1,\ldots,e_{|E|}\}$, $\om(e_i)=\om_i$. Further, all our constrains on the variables $\om_i$ have the form of inequalities, therefore only the matrix $A_{12}$ is non-zero. The rows of this matrix correspond to the inequalities from~$(\ref{eq:omega})$, i.e., they are enumerated by the ordered pairs $(i,j)$, $1\le i<j\le n$, where $M$, the set of boundary vertices of the tree $G$, consists of $n$ elements as above. Therefore $m_1=n(n-1)/2$. The columns of the matrix $A_{12}$ corresponds to edges of the tree. By $a_{ij}^k$ we denote the element of the matrix $A_{12}$ standing at the row $(i,j)$ at the place corresponding to the edge $e_k$. And $a_{ij}^k=1$, if and only if the edge $e_k$ belongs to the path $\g(i,j)$, otherwise $a_{ij}^k=0$. The subset  $\Om_G$ is defined by the inequalities system
$$
A_{12} x_2\le b_1,\quad \text{where $x_2=-(\om_1,\ldots,\om_{|E|})$, and $b_1=-(d_{12},\dots,d_{(n-1)\,n})$},
$$
and we are looking for the least possible value of the linear function $F(x)=\<f_2,x_2\>$ on $\Om_G$, where $f_2=-(1,\ldots,1)$.

Now, let us write down the dual problem. In our case $m_2=0$, and the components $\l_{ij}$ of the $m_1$-dimensional vector $\l_1$are enumerated by the ordered pairs$(i,j)$, $1\le i<j\le n$. The dual linear function has the form
$$
H(\l)=H(\l_1)=-\<b_1,\l_1\>=\sum_{1\le i<j\le n} d_{ij}\l_{ij}.
$$
The solution space $\L_G$, i.e., the subset of the space $\R^{m_1}$, which the greatest value of the function $H$ is searched on, is defined by the following system of linear constrains:
\begin{equation}\label{eq:dual}
A_{12}^T\l_1=-f_2, \qquad \l_1\ge 0.
\end{equation}
In particular, the dual problem is a CLPP. The matrix $A_{12}^T$ is an $(n_2\times m_1)$ matrix. Its rows correspond to edges of the tree $G$, and its columns correspond to ordered pairs $(i,j)$, $i<j$, which can be naturally interpreted as the edges of the complete graph $K(M)$ with the vertex set $M$. By $a^{ij}_k$ we denote the element of the $k$th row of the matrix $A_{12}^T$ standing at the $(i,j)$th column. Let the edge $e_k$ of the tree $G$ generates the partition $\cP_G(e_k)$ of the set $M$ into two non-empty subsets. The element $a^{ij}_k$ equals $1$, if and only if the vertices of the edge $(i,j)$ of the graph $K(M)$ belong to distinct elements of the partition $\cP_G(e_k)$. All the components of the vector $-f_2$ equal $1$.

\begin{ass}\label{ass:mat_cut}
The matrix $A_{12}^T$ is the cut matrix of the graph $K(M)$ corresponding to the cuts family generated by the edges of the tree $G$ connecting $M$.
\end{ass}

In what follows we are interested in the case of binary trees only. If the boundary of a binary tree consists of $n\ge2$ elements, then the number of its vertices of degree $3$ is equal to  $n-2$, and the number of edges equals $2n-3$. If $G$ is a binary tree, and $M$ is the set of its boundary vertices, then by  $C(G)$ we denote the cut matrix of the complete graph $K(M)$, corresponding to the cuts generated by the edges of $G$.

Recall that by \emph{moustaches of a binary tree $G$} we call a pair of its boundary vertices having a common neighbouring vertex.  Also by moustaches we call the corresponding pair of adjacent edges that are incident to those boundary vertices--moustaches. It is easy to verify that any binary tree with $n\ge3$ boundary vertices has moustaches. The operation of  \emph{moustaches elimination\/} can be naturally defined as follows: from the vertex set and the edge set of the binary tree we through out, respectively, two vertices and two edges forming some its moustaches (the same ones). The new vertex of degree $1$ of the tree obtained is proclaimed as a boundary one, and the resulting tree becomes a binary one.

\begin{lem}\label{lem:rk}
Let $G$ be an arbitrary binary tree with a boundary $M$ consisting of $n\ge2$ vertices. Then the rank of the matrix $C(G)$ is maximal, and it equals $2n-3$.
\end{lem}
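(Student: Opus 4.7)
The plan is to prove the lemma by induction on $n$, carried out using the moustache elimination operation described just before the lemma. It is convenient to reinterpret the question as follows: a linear combination $\sum_{k}c_k r_k$ of the rows of $C(G)$, where $r_k$ denotes the row corresponding to the edge $e_k$, vanishes if and only if the \emph{signed weight function} $\om(e_k):=c_k$ has the property that, for every pair $p_i,p_j$ of boundary vertices, the sum of $\om$ over the edges of the unique path in $G$ from $p_i$ to $p_j$ equals zero. Hence proving that the rank of $C(G)$ equals $2n-3$ is equivalent to showing that only the zero signed weight function has this property.

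The base case $n=2$ is immediate, since the tree is a single edge and the matrix is $1\times 1$ with entry $1$. For the inductive step, fix any moustaches of $G$: boundary vertices $p_1, p_2$ with a common neighbour $v$, joined to $v$ by moustache edges $e_1=p_1v$ and $e_2=p_2v$. The unique $G$-path from $p_1$ to $p_2$ uses only $e_1$ and $e_2$, so vanishing along this path gives $c_1+c_2=0$. For any $j\ge 3$, the unique $G$-path from $p_1$ to $p_j$ passes through $v$ and omits $e_2$, while the unique $G$-path from $p_2$ to $p_j$ omits $e_1$ and shares its remaining $v$-to-$p_j$ segment with the former. Subtracting the two corresponding vanishing-sum equations yields $c_1-c_2=0$, whence $c_1=c_2=0$.

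Now let $G'$ be the binary tree obtained by eliminating the moustaches $\{e_1,e_2\}$, so that its boundary is $M'=\{v,p_3,\ldots,p_n\}$ and its edge set is $\{e_3,\ldots,e_{2n-3}\}$. For every pair of points in $M'$, the unique $G'$-path between them coincides with a piece of some $G$-path between boundary vertices: pairs from $\{p_3,\ldots,p_n\}$ yield identical paths in $G$ and $G'$, and the $G'$-path from $v$ to $p_j$ is obtained from the $G$-path from $p_1$ to $p_j$ by deleting $e_1$. Since $c_1=c_2=0$, the remaining coefficients $c_3,\ldots,c_{2n-3}$ define a signed weight function on $G'$ which sums to zero along every boundary-to-boundary $G'$-path, so by the induction hypothesis they must all vanish. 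The only mild obstacle is the path bookkeeping at this reduction step, i.e., verifying that the columns $(p_1,p_2)$, $(p_1,p_j)$ and $(p_2,p_j)$ of $C(G)$ collectively encode the column $(v,p_j)$ of $C(G')$ once the moustache coefficients have been killed off; this is a routine path-tracking argument.
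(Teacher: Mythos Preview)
Your proof is correct and follows essentially the same inductive strategy as the paper: reduce via moustache elimination, use the pair of moustache paths to force the two moustache coefficients to vanish, and then apply the inductive hypothesis to the smaller tree. Your reformulation in terms of path sums of a signed weight function is a clean equivalent of the paper's explicit matrix bookkeeping (column duplication and inspection of the last two rows), but the underlying argument is the same.
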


\begin{proof}
Proceed an induction on the number $n$ of boundary vertices of the tree $G$. If $n=2$, then  $G=K(M)$, and $C(G)=(1)$, and hence, the statement of Lemma is true.

Now consider an arbitrary binary tree $G$ with $n+1$ boundary vertices and chose its arbitrary moustaches. Enumerate the boundary vertices of the tree $G$ in such a way that the vertices forming the moustaches get the numbers $n$ and $n+1$, and enumerate the edges of $G$ in such a way that the edges forming the moustaches get the last numbers.

Eliminate the chosen moustaches. Preserve the numeration of edges in the resulting binary tree $H$, and let $n$ be the number of the new boundary vertex. The matrices $C(G)$ and $C(H)$ are related in the following way. As above, enumerate the edges of the complete graph by the pairs of the form $(i,j)$, $i<j$, that are ordered lexicographically. The matrix $C(G)$ has $n$ more columns than $C(H)$; these columns correspond to the pairs $(1,n+1)$, $(2,n+1)$, \dots, $(n-1,n+1)$, and $(n,n+1)$. Besides, the matrix $C(G)$ has two more rows. These rows correspond to the edges of the moustaches chosen in $G$, and they are denoted by $a$ and $b$. We get the matrix of the following form (in notations of the columns the parenthesis are omitted to save the place):
$$
\begin{array}{cc}
&
\begin{array}{cccccccccccccccc}
 \sss{1,2} & \cdot &\cdot&\sss{1,n} &\sss{1,n+1} & \sss{2,1}&\cdot &\sss{2,n}&\sss{2,n+1}&\cdot&\cdot
&\sss{n-1,n}&\sss{n-1,n+1}&\sss{n,n+1}
\end{array} \\
\begin{array}{c}
\\
e\\
\\
\\
a\\
b
\end{array}
&
\left(\begin{array}{cccccccccccccccc}
 \cdot & \cdot &\cdot& X & X & \cdot &\cdot & X & X &\cdot
&\cdot& X &X&0 \\
\cdot & \cdot &\cdot& X & X & \cdot &\cdot & X & X &\cdot
&\cdot& X &X&0 \\
 \cdot & \cdot &\cdot& X & X & \cdot &\cdot & X & X &\cdot
&\cdot& X &X&0 \\
\phantom{\sss{1,2}} & \phantom{\cdot} &\phantom{\cdot}&\phantom{\sss{1,n}} &\phantom{\sss{1,n+1}} & \phantom{\sss{2,1}}&\phantom{\cdot} &\phantom{\sss{2,n}}&\phantom{\sss{2,n+1}}&\phantom{\cdot}&\phantom{\cdot}
&\phantom{\sss{n-1,n}}&\phantom{\sss{n-1,n+1}}&\phantom{\sss{n,n+1}}\\
0 & \cdot&0 & 1 & 0 & 0 &\cdot & 1 & 0 &\cdot&0
& 1 &0&1 \\
0 & \cdot&0 & 0 & 1 & 0 &\cdot & 0 & 1 &\cdot&0
& 0 &1&1
\end{array}\right)
\end{array}
$$
Indeed, consider an arbitrary edge $e$ of the tree $H$. Notice that the boundary vertices of $H$ having the numbers $i$ and $n$, $1\le i\le n-1$, belong to distinct elements of the partition $\cP_{H}(e)$, if and only if the boundary vertices of the tree $G$ with the numbers $i$ and $n$, and also its boundary vertices with the numbers $i$ and $n+1$ belong to distinct elements of the partition $\cP_G(e)$. Therefore, the elements of the row of the matrix $C(G)$ corresponding to the edge $e$ with the numbers of the form $(i,n)$, and $(i,n+1)$ coincide with the elements of the row of the matrix $C(H)$ corresponding to the edge $e$ with the numbers of the form $(i,n)$, $i=1,\ldots,n-1$. Further, the boundary vertices of the tree $G$ having the numbers $n$ and $n+1$ and corresponding to the moustaches chosen always are in the same element of the partition $\cP_G(e)$, and hence the last element of the row corresponding to the edge $e$ in the matrix $C(G)$ equals zero.

In other words, if we eliminate two last rows (that correspond to moustaches chosen) in the matrix $C(G)$, then the resulting matrix can be obtained from the matrix $C(H)$ by doubling the columns with the numbers of the form $(i,n)$, $i=1,\ldots,n-1$, and adding the last zero column.

Further, let the edge $a$ of the moustaches chosen corresponds to the $n$th boundary vertex of the tree  $G$, and the edge $b$ corresponds to the $(n+1)$th one. Then
$$
\cP_G(a)=\{1,\ldots,n-1,n+1\}\sqcup\{n\},\qquad\text{and}\qquad
\cP_G(b)=\{1,\ldots,n-1,n\}\sqcup\{n+1\},
$$
and therefore the last two rows of the matrix $C(G)$ have the form
$$
\begin{array}{cccccccccccccccc}
{\sss{1,2}} & {\cdots} &{\cdots}&{\sss{1,n}} &{\sss{1,n+1}} & {\sss{2,1}}&{\cdots} &{\sss{2,n}}&{\sss{2,n+1}}&{\cdots}&{\cdots}
&{\sss{n-1,n}}&{\sss{n-1,n+1}}&{\sss{n,n+1}}\\
0 & \cdots&0 & 1 & 0 & 0 &\cdots & 1 & 0 &\cdots&0
& 1 &0&1 \\
0 & \cdots&0 & 0 & 1 & 0 &\cdots & 0 & 1 &\cdots&0
& 0 &1&1
\end{array}
$$

Now, show that the rows of the matrix $C(G)$ are linearly independent. Assume that some their linear combination vanishes. Due to inductive assumptions the rows of the matrix $C(H)$ are linear independent, therefore in this combination the last two rows can not have zero coefficients. Then their coefficients are opposite (because the components of these two rows standing at the last column coincides, but the remaining components of the last row equal zero). The latter is impossible, because, on the one hand, any linear combination of the remaining rows of the matrix  $C(G)$ has the same components at the columns with the numbers of the form  $(i,n)$, and $(i,n+1)$, $i=1,\ldots,n-1$, and on the other hand, two last rows taken with opposite coefficients give at those positions non-zero but opposite contributions. Thus, the rows of the matrix $C(G)$ are linear independent, and its rank is equal to the number of its rows, that, in its turn, is equal  to the number of edges of a binary tree with $n$ boundary vertices. Lemma is proved.
\end{proof}

Now recall the definitions introduced by A.~Eremin~\cite{Trudy} and~\cite{Eremin}. Let $S$ be a finite set consisting of $n$ elements. A  \emph{multi-cyclic order of multiplicity $k$ on the set  $S$} is a mapping $\pi\:\Z_{nk}\to S$ such that
\begin{enumerate}
\item $\pi(j)\ne\pi(j+1)$ for any $j\in\Z_{nk}$.
\item  The pre-image of each element $s\in S$ under the mapping $\pi$ consists of exactly $k$ elements.
\end{enumerate}

Let $G$ be a binary tree connecting $M$. A multi-cyclic order on $M$ is said to be {\em matched with the tree $G$} or is referred as a \emph{multi-tour of the tree $G$}, if there exists a positive integer $m$ such that for any $e\in E$ and $M_i\in \cP_G(e)$ there exists exactly $m$ elements $p\in\Z_{nk}$ such that $\pi(p)\in M_i$, but $\pi(p+1)\notin M_i$. This number $m$ is called the \textit{multiplicity of the multi-tour}. By $\mathcal T(G)$  we denote the set of all multi-tours of the tree $G$. It is not difficult to verify, see~\cite{Eremin}, that the following result holds.

\begin{ass}
If a multi-cyclic order is a multi-tour of a tree, then its multiplicity as of a multi-tour coincides with the one as of a multi-cyclic order.
\end{ass}

Let $G=(V,E)$ be an arbitrary binary tree with the boundary  $M=\{p_i\}$.

\begin{ass} \label{ass:PlanarEquiv}
A multi-cyclic order $\pi$ on the set $M$ is a multi-tour of the tree $G$ of a multiplicity $m$, if and only if for any $e\in E$ exactly $2m$ paths in $G$ of the form $\g\big(\pi(j),\pi(j+1)\big)$, $j\in \Z_{mn}$, passes through $e$.
\end{ass}	

Let $G$ be an arbitrary binary tree with the boundary $M$. Each multi-tour $\pi$ corresponds to a cyclic walk $c_\pi$ in the complete graph $K(M)$, whose consecutive edges have the form $\pi(j)\pi(j+1)$. The cyclic walk $c_\pi$ passes each vertex $p_i\in M$ the same number of times, and this number is equal to the multiplicity $k$ of the multi-tour $\pi$. The value
$$
\frac1{2k}\sum_{j=0}^{nk}\r\big(\pi(j)\pi(j+1)\big),
$$
i.e., the length of the walk $c_\pi$ in the complete graph $K(M)$ in the metric space $M$ divided by $2k$ is called the  \emph{multi-perimeter\/} of the multi-tour $\pi$.

\begin{rk}
In the case of $k=1$ a multi-tour is also called just a tour, and its multi-perimeter is called \emph{half-perimeter}.
\end{rk}

By each multi-tour $\pi$ of the tree $G$ construct a vector $w^{\pi}\in\R^{m_1}$, $m_1=n(n-1)/2$, whose component $w^\pi_{ij}$ corresponding to an edge $ij$ of the graph $K(M)$, $i<j$, is equal to the number of occurrences of the edge $ij$ in the walk $c_\pi$.

\begin{lem}\label{lem:sol_cycles}
Let $G$ be an arbitrary binary tree with the boundary $M$, and let $\pi$ be its arbitrary multi-tour of multiplicity $k$. Then the vector $\dfrac{1}{2k}w^\pi$ satisfies System~$(\ref{eq:dual})$, and hence, belongs to the solutions space $\L_G$ of the dual problem, and the function $H$ at it equals the multi=perimeter of the multi-tour $\pi$.
\end{lem}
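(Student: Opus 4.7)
The plan is to verify the three claims in turn — non-negativity of $\frac{1}{2k}w^\pi$, the linear equation $A_{12}^T\l_1=-f_2$, and the value $H(\frac{1}{2k}w^\pi)=$ multi-perimeter — by unpacking the combinatorial meaning of each side.

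First, non-negativity is immediate: each component $w^\pi_{ij}$ is by construction the number of occurrences of the edge $ij$ in the cyclic walk $c_\pi$, which is a non-negative integer, and we divide by the positive number $2k$.

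Next, I would decode the equation $A_{12}^T\l_1=-f_2$, where $-f_2=(1,\ldots,1)$. By Assertion~\ref{ass:mat_cut}, the row of $A_{12}^T$ indexed by an edge $e\in E(G)$ has a $1$ in column $(i,j)$ exactly when the pair $p_ip_j$ is a cut edge of $\cP_G(e)$ in $K(M)$. Hence the equation for the row $e$ reads
$$
\frac{1}{2k}\sum_{(i,j)\,\text{crosses}\,e} w^\pi_{ij}=1,
$$
i.e., the total number of occurrences in $c_\pi$ of edges of $K(M)$ crossing $e$ should equal $2k$. But an edge $\pi(j)\pi(j+1)$ of $c_\pi$ crosses $e$ if and only if the path $\g\bigl(\pi(j),\pi(j+1)\bigr)$ in $G$ passes through $e$. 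So the sum on the left is exactly the number of indices $j\in\Z_{nk}$ for which $\g\bigl(\pi(j),\pi(j+1)\bigr)$ passes through $e$. By Assertion~\ref{ass:PlanarEquiv} applied to the multi-tour $\pi$ of multiplicity $k$, this number equals $2k$, which gives the required identity.

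Finally, I would compute $H$ at $\frac{1}{2k}w^\pi$:
$$
H\!\left(\tfrac{1}{2k}w^\pi\right)=\sum_{1\le i<j\le n}d_{ij}\cdot\tfrac{1}{2k}w^\pi_{ij}
=\tfrac{1}{2k}\sum_{j\in\Z_{nk}}\r\bigl(\pi(j),\pi(j+1)\bigr),
$$
since grouping the sum along $c_\pi$ by the unordered pair $\{\pi(j),\pi(j+1)\}$ reassembles exactly the weighted count $\sum d_{ij}w^\pi_{ij}$. By the definition of multi-perimeter, the right-hand side is the multi-perimeter of $\pi$, as required.

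I do not expect any real obstacle here: the whole content of the lemma is a translation exercise, and the only non-trivial ingredient is Assertion~\ref{ass:PlanarEquiv}, whose equivalent characterization of multi-tours provides the crucial count $2k$ in the cut constraint. The main care to be taken is the bookkeeping between the cyclic index set $\Z_{nk}$ of $\pi$ and the unordered pairs $(i,j)$ with $i<j$ indexing the coordinates of $\l_1$, so that each step $\pi(j)\pi(j+1)$ of $c_\pi$ is counted once in $w^\pi_{ij}$.
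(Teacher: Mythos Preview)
Your argument is correct and follows essentially the same route as the paper's own proof: interpret each row of $A_{12}^T$ as the cut of $K(M)$ determined by an edge $e$ of $G$, identify the scalar product with $w^\pi$ as the number of steps of $c_\pi$ whose path in $G$ passes through $e$, and invoke Assertion~\ref{ass:PlanarEquiv} to conclude this count equals $2k$; the computation of $H$ is then immediate. Your write-up is in fact more explicit than the paper's (you spell out the non-negativity constraint and the passage from ``edge of $K(M)$ crosses $e$'' to ``path $\g$ passes through $e$''), but the substance is the same.
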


\begin{proof}
Indeed, as we have already mentioned above, equations of System~$(\ref{eq:dual})$ correspond to edges of the tree $G$, and the variables (i.e., the columns of the matrices) correspond to the edges of the complete graph $K(M)$. Consider the row corresponding to an edge $e$. All its non-zero elements equal $1$ and stand at the positions corresponding to that edges of $K(M)$, whose ends lie in different components of the partition $\cP_G(e)$ of $M$ generated by the edge $e$ of  $G$. Therefore, scalar product of such row and the vector $w^\pi$ is equal to the number of such edges belonging to the multi-tour $\pi$ (with multiplicities). But this number equals $2k$ in accordance with Assertion~\ref{ass:PlanarEquiv}, that implies the first statement of Lemma. The second statement is evident, because the function $H$ is equal to the linear combination of variables, whose values are equal to the edges multiplicities, with the coefficients that equal distances between the vertices.
\end{proof}

Theorem~3.3 from paper~\cite{Eremin} can be reformulated as follows.

\begin{lem}\label{lem:multi_inverse}
For any non-negative integer solution $\l$ of the equations system $A_{12}^T\l_1=-2kf_2$, where  $k\in\N$, there exists a multi-tour $\pi$ of the tree $G$ of multiplicity $k$, such that $\l=w^\pi$.
\end{lem}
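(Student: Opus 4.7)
The plan is to proceed by induction on the number $n$ of boundary vertices of $G$. The base case $n=2$ is immediate: $G$ is a single edge, the system reduces to $\l_{12}=2k$, and the alternating cyclic sequence $(p_1,p_2,p_1,p_2,\ldots)$ of length $2k$ realizes $\l$ as a multi-tour of multiplicity $k$.

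For the inductive step I choose a moustache $\{p_n,p_{n+1}\}$ of $G$ with common interior vertex $v$. The first key computation is that necessarily $\l_{n(n+1)}=k$: summing the equations at the two moustache edges $p_nv$, $p_{n+1}v$ gives $2\l_{n(n+1)}+\sum_{i<n}(\l_{in}+\l_{i(n+1)})=4k$, while the equation at the edge joining $v$ to the rest of $G$ (or, in the degenerate case $n=2$, at the third leaf edge) states that $\sum_{i<n}(\l_{in}+\l_{i(n+1)})=2k$; subtraction yields the claim. Eliminating this moustache and declaring $v$ a boundary vertex produces a smaller binary tree $H$ with boundary $M'=\{p_1,\ldots,p_{n-1},v\}$; set $\l'_{ij}=\l_{ij}$ for $i,j<n$ and $\l'_{iv}=\l_{in}+\l_{i(n+1)}$. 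Comparing $\cP_G(e)$ and $\cP_H(e)$ edge by edge shows that $\l'$ is a non-negative integer solution of the analogous system for $H$ with the same $k$, and the inductive hypothesis supplies a multi-tour $\pi'$ of $H$ of multiplicity $k$ with $w^{\pi'}=\l'$.

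The substantive part of the argument lifts $\pi'$ to a multi-tour $\pi$ of $G$. The vertex $v$ appears at $k$ cyclic positions in $\pi'$; at the $s$-th such position denote its neighbours in $\pi'$ by $a_s,b_s$. I replace each occurrence of $v$ by an alternating word over $\{p_n,p_{n+1}\}$ of length $r_s\ge 1$, with first letter $x_s$ and last letter $y_s$. The colours $(x_s,y_s)\in\{n,n+1\}^2$ must be chosen so that for every $i<n$ exactly $\l_{in}$ of the $\l'_{iv}$ adjacencies of $v$-visits to $p_i$ in $\pi'$ receive colour $n$, which is possible independently per $i$ since $0\le\l_{in}\le\l'_{iv}$. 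The lengths $r_s$ must then satisfy $r_s$ odd iff $x_s=y_s$ and $\sum_s r_s=2k$, so that the internal $p_np_{n+1}$-transitions total $\sum(r_s-1)=k=\l_{n(n+1)}$ and each of $p_n$, $p_{n+1}$ appears exactly $k$ times in $\pi$. Verifying $w^\pi=\l$ edge by edge, together with consecutive-distinctness (immediate since $a_s,b_s\notin\{p_n,p_{n+1}\}$ and insertions alternate), then closes the induction.

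The main obstacle is the parity bookkeeping at the end of the lifting: feasibility of $r_s$'s with the required parities and sum $2k$ amounts to the number $A$ of visits with $x_s=y_s$ being even, i.e.\ $A_n=A_{n+1}$, where $A_c$ counts visits with $x_s=y_s=p_c$. This equality is not a free choice but is forced by the system: the identity $\sum_i\l_{in}=2k-\l_{n(n+1)}=k$ and its analogue for $n+1$ show that both the $x$-marginal and the $y$-marginal of any valid colouring contain exactly $k$ entries of each colour, from which $A_n=A_{n+1}$ follows algebraically. Recognizing this hidden symmetry is what makes the combinatorial realization possible; the residual deficit $2k-(A+2B)=A$ between the minimal admissible $\sum r_s$ and the target is then even and can be distributed in increments of two among the $r_s$'s while preserving parities.
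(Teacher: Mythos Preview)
Your argument is correct. The inductive reduction by moustache elimination, the derivation $\l_{n(n+1)}=k$ from the three edge-equations at the moustache vertex, the passage to the contracted solution $\l'$ on $H$, and the lifting step all go through. The crucial parity check is also right: once the $2k$ adjacency slots are coloured so that $\sum_{i<n}\l_{in}=k$ slots receive colour $n$, the identity $2A_n+B=k=2A_{n+1}+B$ follows automatically, giving $A_n=A_{n+1}$ and hence an even deficit $A$ that can be absorbed by increasing the $r_s$ in steps of two. One minor point worth stating explicitly is that the $2k$ slots really can be coloured independently (the choice of $x_s$ and $y_s$ for different $s$, and even the left/right slot within a single $s$, are uncoupled), so the per-$i$ marginal conditions $0\le\l_{in}\le\l'_{iv}$ are indeed the only obstructions; you assert this but it is the hinge of the construction.

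As for comparison with the paper: the paper does not supply a proof of this lemma at all. It is introduced with the sentence ``Theorem~3.3 from paper~\cite{Eremin} can be reformulated as follows'' and then simply stated, so the paper is quoting Eremin's result rather than re-deriving it. Your inductive argument is therefore genuinely additional content. Eremin's original proof in~\cite{Eremin} proceeds via a different combinatorial construction (building the multi-tour through an Eulerian-type argument on an auxiliary multigraph), whereas your approach is a direct structural induction on the tree that keeps the bookkeeping local to a single moustache. Both yield the same conclusion; yours has the advantage of being self-contained within the framework the present paper sets up (cuts, the matrix $C(G)$, and moustache elimination as in Lemma~\ref{lem:rk}), at the cost of the somewhat delicate parity verification at the end.
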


The weight formula for minimal parametric filling obtained by A.~Eremin~\cite[Theorem~4.1]{Eremin} permits to reduce the problem of minimal parametric filling finding to the search of a multi-tour of maximal multi-perimeter, and the multiplicity of such multi-tours is estimated from above by the value $(C_n^2)!$.

\begin{ass}\label{ass:vertices}
Let $\cM=(M,\r)$ be an arbitrary finite metric space, and let $G$ be an arbitrary binary tree connecting  $M$. The weight $\mpf_{-}(\cM,G)$ of minimal parametric filling of the type $G$ of a metric space $\cM$ is equal to he maximal value of the objective function $H=\sum d_{ij}\l_{ij}$ on  the vertices of the polyhedral solution space $\L_G$ of the dual problem defined by System~$(\ref{eq:dual})$. If $\l\in\L^*$ is a vertex which the maximum is attained at, then $\mpf_{-}(\cM,G)$ equals multi-perimeter of the multi-tour that corresponds to the solution $\l$.
\end{ass}

\begin{proof}
Indeed, as it is shown in~\cite{IvaTuzhMF}, a parametric minimal filling of a finite metric space does always exist, i.e., the corresponding Linear Programming Problem is always solvable, and hence, the dual problem is also always solvable. Its solution as a solution of CLPP is always attained at some vertex of the polyhedral set $\L_G$ that is the solutions space. The vertices of  $\L_G$ are described by Assertion~\ref{ass:extreme_points}, and in accordance with this Assertion each vertex $\tilde\l$ is defined as the solution of a linear equations system, whose matrix is a non-degenerate square sub-matrix of the matrix $A_{12}^T$ of size $(2n-3)\x(2n-3)$, where $n$ stands for the number of points in $M$, and the right hand side equals $-f_2$ (the vector consisting of $1$). Due to Cramer's rule, the components of this solution are relations of the determinants of matrices consisting of $0$ and $1$, in particular, they are rational numbers, and the remaining components of the vector $\tilde\l$ equals zero.  Multiplying $\tilde\l$ by doubled common denominator $k$ of its non-zero components, we get the vector $\l$ satisfying the system $A_{12}^T\l_1=-2kf_2$ from Lemma~\ref{lem:multi_inverse}. In accordance with the  same Lemma~\ref{lem:multi_inverse},  the vector $\l$ corresponds to a multi-tour $\pi$. Evidently, the multi-perimeter of this multi-tour equals to the value of the objective function $H$ of the dual problem~(\ref{eq:dual}) at the vector $\tilde\l$, i.e., to the weight of the minimal parametric filling.
\end{proof}

This result permits to improve essentially an estimation on the multiplicity of the multi-tours which the weight of minimal parametric filling can attained at.

\begin{ass}\label{ass:multi_estimate}
Let a finite metric space $M$ consists of $n\ge 3$ points, and let $G$ be a binary tree connecting $M$. The weight of minimal parametric filling of the type $G$ of the space $M$ is attained at a multi-tour of the tree $G$, whose multiplicity does not exceed  $2^{2n-5}$.
\end{ass}

\begin{proof}
The proof of Assertion~\ref{lem:multi_inverse} implies that the multiplicity of a multi-tour, which the weight of minimal parametric filling attained at, does not exceed the maximal minor of the matrix  $A_{12}^T$. Recall that the columns of this matrix are enumerated by the edges of the complete graph on the vertex set $M$, the column $(i,j)$ consists of $2n-3$ elements enumerated by the edges of the tree $G$, and this element equals $1$, if and only if the corresponding edge belongs to the path in $G$ connecting its $i$th and $j$th boundary vertices. Such matrices are sometimes called \emph{paths matrices\/} of the graph  (for a fixed family of paths). In paper~\cite{BR} an estimate on determinants of such matrices is obtained, namely, it is shown that the determinant of the paths matrix of any paths system consisting of $(k-1)$ paths in an arbitrary tree with $k$ vertices does nor exceed $2^{k-1}$. And if the paths system contains a pair of paths starting at a common vertex, then the estimate can be improved to $2^{k-3}$. Our case corresponds to  $k=2n-2$. Moreover, for $n\ge 3$ any system of $2n-3$ paths in $G$ connecting boundary vertices satisfies the latter condition, therefore we obtain a stronger estimate. Assertion is proved.
\end{proof}

\begin{rk}
Apparently the best general estimate on the determinant of a matrix consisting of ones and zeros was obtained in~\cite[Problem~523]{FaddSom}. The estimate~\cite{BR} is better, but it is not exact also, especially in our particular case. The sequence $2^{2n-5}$, $n\ge 3$, starts as $2, 8, 32, 128, 512, \ldots$. But the formulas obtained by the authors~\cite{IvaTuzhMF}  and  B.~Bednov~\cite{Bed} together with the computational results of the present paper, see below, show that the sequence $\{k_n\}$ of maximal multiplicities of the multi-tours corresponding to the vertices of the polyhedrons $\L_G$ for the binary trees $G$ with $n\ge 3$ boundary vertices starts as $1, 1, 1, 2, 2,\dots$. An interesting algebraic problem appears: Is it possible to improve the estimate from~\cite{BR} assuming that the trees under consideration are binary, and the paths connect boundary vertices only.
\end{rk}

\section{Examples}
\markright{\thesection.~Examples}
In the present Section we apply our approach to derive formulas for the weight of minimal parametric filling of a given type, and also to explain existence of examples of open families of finite metric spaces each of which has minimal fillings of several distinct types, constructed by Z.~Ovsyannikov in~\cite{Ovs}.

In all examples listed below the edges of the complete graph on an $n$-element set $M$ are enumerated by the pairs $(i,j)$, $i<j$, and ordered lexicographically. The distances in the space  $M$ are denoted by $d_{ij}$. To be short, the matrix $A_{12}^T$ is denoted by $A$.

\subsection{Four-Point Spaces}
Let $n=4$. Consider the unique binary tree with $4$ vertices of degree $1$, and let the vertices of its moustaches are the pairs of points from $M$ having the numbers $1,\,2$ and $3,\,4$, respectively. Enumerate the boundary edges in accordance with the enumeration of the boundary vertices, and let the unique interior vertex have the number $5$. Then
$$
A=\left(
\begin{array}{cccccc}
 1 & 1 & 1 & 0 & 0 & 0 \\
 1 & 0 & 0 & 1 & 1 & 0 \\
 0 & 1 & 0 & 1 & 0 & 1 \\
 0 & 0 & 1 & 0 & 1 & 1 \\
 0 & 1 & 1 & 1 & 1 & 0 \\
\end{array}
\right),
$$
the solution space $\L$ is a straight segment in the $6$-dimensional space with the end-points  (vertices)
$$
\frac{1}{2}\left(1,0,1,1,0,1\right),\qquad
\frac{1}{2}\left(1,1,0,0,1,1\right),
$$
the values of the objective function at this vertices equal
$$
\frac{1}{2}\big(d_{12}+d_{14}+d_{23}+d_{34}\big),\qquad \frac{1}{2}\big(d_{12}+d_{13}+d_{24}+d_{34}\big),
$$
respectively, and the weight of the minimal parametric filling of this type equals to the maximum of these two values  (two half-perimeters of the corresponding tours). This formula is obtained by the authors in~\cite{IvaTuzhMF}.

\subsection{Five-Point Spaces}
Let $n=5$. Consider unique binary tree with $5$ vertices of degree $1$, and let the vertices of its moustaches are the pairs of the points of the space $M$ having the numbers $1,\,2$ and $4,\,5$, respectively, and the remaining boundary vertex has the number $3$. Enumerate the boundary edges in accordance with the enumeration of the boundary vertices, and let the interior edge adjacent with the edges of the moustaches  $1,\,2$ have the number $6$, and the remaining interior edge have the number $7$. Then
$$
A=\left(
\begin{array}{cccccccccc}
 1 & 1 & 1 & 1 & 0 & 0 & 0 & 0 & 0 & 0 \\
 1 & 0 & 0 & 0 & 1 & 1 & 1 & 0 & 0 & 0 \\
 0 & 1 & 0 & 0 & 1 & 0 & 0 & 1 & 1 & 0 \\
 0 & 0 & 1 & 0 & 0 & 1 & 0 & 1 & 0 & 1 \\
 0 & 0 & 0 & 1 & 0 & 0 & 1 & 0 & 1 & 1 \\
 0 & 1 & 1 & 1 & 1 & 1 & 1 & 0 & 0 & 0 \\
 0 & 0 & 1 & 1 & 0 & 1 & 1 & 1 & 1 & 0 \\
\end{array}
\right),
$$
the solution space $\L$ is a $3$-dimensional tetrahedron in $10$-dimensional space with the vertices
\begin{gather*}
\frac{1}{2}\big(1,0,0,1,1,0,0,1,0,1\big),\quad
\frac{1}{2}\big(1,1,0,0,0,0,1,1,0,1\big),\\
\frac{1}{2}\big(1,0,1,0,1,0,0,0,1,1\big),\quad
\frac{1}{2}\big(1,1,0,0,0,1,0,0,1,1\big),
\end{gather*}
and the values of the objective function at these vertices equal
\begin{gather*}
\frac{1}{2}\big(d_{12}+d_{15}+d_{23}+d_{34}+d_{45}\big),\quad
\frac{1}{2}\big(d_{12}+d_{13}+d_{25}+d_{34}+d_{45}\big),\\
\frac{1}{2}\big(d_{12}+d_{14}+d_{23}+d_{35}+d_{45}\big),\quad
\frac{1}{2}\big(d_{12}+d_{13}+d_{24}+d_{35}+d_{45}\big),
\end{gather*}
respectively. The weight of minimal parametric filling of this type equals to the maximum of those four expressions  (the half-perimeters of the corresponding tours depicted in Figure~\ref{fig:five}). This formula can be easily obtained from results of B.~Bednov~\cite{Bed}, and its particular cases were obtained by students of Mechanical and Mathematical Faculty of Lomonosov Moscow Sate University E.~Zaval'nuk, Z.~Ovsyannikov, O.~Rubleva in their term theses.

\ig{five}{0.15}{fig:five}{Tours of five-point spaces appearing in the weight formula.}

\subsection{Six-Point Spaces}
Let $n=6$. In this case there are two binary trees with $6$ vertices of degree $1$, one of them has two moustaches, and the other one has three moustaches. In Figure~\ref{fig:six_trees} the both trees are shown together with enumerations of the boundary vertices. The boundary edges are enumerated in accordance with the enumeration of the boundary vertices, and the remaining edges are enumerated as it is shown in Figure~\ref{fig:six_trees}. At first we consider the tree with two moustaches. Then
$$
A=
\left(
\begin{array}{ccccccccccccccc}
 1 & 1 & 1 & 1 & 1 & 0 & 0 & 0 & 0 & 0 & 0 & 0 & 0 & 0 & 0 \\
 1 & 0 & 0 & 0 & 0 & 1 & 1 & 1 & 1 & 0 & 0 & 0 & 0 & 0 & 0 \\
 0 & 1 & 0 & 0 & 0 & 1 & 0 & 0 & 0 & 1 & 1 & 1 & 0 & 0 & 0 \\
 0 & 0 & 1 & 0 & 0 & 0 & 1 & 0 & 0 & 1 & 0 & 0 & 1 & 1 & 0 \\
 0 & 0 & 0 & 1 & 0 & 0 & 0 & 1 & 0 & 0 & 1 & 0 & 1 & 0 & 1 \\
 0 & 0 & 0 & 0 & 1 & 0 & 0 & 0 & 1 & 0 & 0 & 1 & 0 & 1 & 1 \\
 0 & 1 & 1 & 1 & 1 & 1 & 1 & 1 & 1 & 0 & 0 & 0 & 0 & 0 & 0 \\
 0 & 0 & 1 & 1 & 1 & 0 & 1 & 1 & 1 & 1 & 1 & 1 & 0 & 0 & 0 \\
 0 & 0 & 0 & 1 & 1 & 0 & 0 & 1 & 1 & 0 & 1 & 1 & 1 & 1 & 0 \\
\end{array}
\right).
$$

\ig{6trees}{0.25}{fig:six_trees}{Binary trees with six boundary vertices.}

The solution space $\L$ is a $6$-dimensional convex polyhedron in the $15$-dimensional space. It has $8$ vertices, whose coordinates are
\begin{gather*}
\frac{1}{2}\big(1,0,0,0,1,1,0,0,0,1,0,0,1,0,1\big),\quad
\frac{1}{2}\big(1,1,0,0,0,0,0,0,1,1,0,0,1,0,1\big),\\
\frac{1}{2}\big(1,0,1,0,0,1,0,0,0,0,0,1,1,0,1\big), \quad
\frac{1}{2}\big(1,1,0,0,0,0,1,0,0,0,0,1,1,0,1\big),\\
\frac{1}{2}\big(1,0,0,1,0,1,0,0,0,1,0,0,0,1,1\big),\quad
\frac{1}{2}\big(1,1,0,0,0,0,0,1,0,1,0,0,0,1,1\big),\\
\frac{1}{2}\big(1,0,1,0,0,1,0,0,0,0,1,0,0,1,1\big),\quad
\frac{1}{2}\big(1,1,0,0,0,0,1,0,0,0,1,0,0,1,1\big).
\end{gather*}
The values of the objective function at this vertices are equal to
\begin{gather*}
\frac{1}{2}\big(d_{12}+d_{16}+d_{23}+d_{34}+d_{45}+d_{56}\big),\quad
\frac{1}{2}\big(d_{12}+d_{13}+d_{26}+d_{34}+d_{45}+d_{56}\big),\\
\frac{1}{2}\big(d_{12}+d_{14}+d_{23}+d_{36}+d_{45}+d_{56}\big),\quad
\frac{1}{2}\big(d_{12}+d_{13}+d_{24}+d_{36}+d_{45}+d_{56}\big),\\
\frac{1}{2}\big(d_{12}+d_{15}+d_{23}+d_{34}+d_{46}+d_{56}\big),\quad
\frac{1}{2}\big(d_{12}+d_{13}+d_{25}+d_{34}+d_{46}+d_{56}\big),\\
\frac{1}{2}\big(d_{12}+d_{14}+d_{23}+d_{35}+d_{46}+d_{56}\big),\quad
\frac{1}{2}\big(d_{12}+d_{13}+d_{24}+d_{35}+d_{46}+d_{56}\big),
\end{gather*}
respectively, and the weight of the minimal parametric filling of the type $G$ is equal to the maximum of these eight expressions (eight half-perimeters of the corresponding tours depicted in Figure~\ref{fig:six_sn}).

\ig{6sn}{0.25}{fig:six_sn}{Tours of the binary tree with six boundary vertices and two moustaches that appear in the formula of the weight of minimal parametric filling of this type.}

Let us pass to the case of the tree with three moustaches. We have:
$$
A=
\left(
\begin{array}{ccccccccccccccc}
 1 & 1 & 1 & 1 & 1 & 0 & 0 & 0 & 0 & 0 & 0 & 0 & 0 & 0 & 0 \\
 1 & 0 & 0 & 0 & 0 & 1 & 1 & 1 & 1 & 0 & 0 & 0 & 0 & 0 & 0 \\
 0 & 1 & 0 & 0 & 0 & 1 & 0 & 0 & 0 & 1 & 1 & 1 & 0 & 0 & 0 \\
 0 & 0 & 1 & 0 & 0 & 0 & 1 & 0 & 0 & 1 & 0 & 0 & 1 & 1 & 0 \\
 0 & 0 & 0 & 1 & 0 & 0 & 0 & 1 & 0 & 0 & 1 & 0 & 1 & 0 & 1 \\
 0 & 0 & 0 & 0 & 1 & 0 & 0 & 0 & 1 & 0 & 0 & 1 & 0 & 1 & 1 \\
 0 & 1 & 1 & 1 & 1 & 1 & 1 & 1 & 1 & 0 & 0 & 0 & 0 & 0 & 0 \\
 0 & 1 & 1 & 0 & 0 & 1 & 1 & 0 & 0 & 0 & 1 & 1 & 1 & 1 & 0 \\
 0 & 0 & 0 & 1 & 1 & 0 & 0 & 1 & 1 & 0 & 1 & 1 & 1 & 1 & 0 \\
\end{array}
\right).
$$
The solution space $\L$ is $6$-dimensional convex polyhedron in the $15$-dimensional space. It has  $12$ vertices, whose coordinates are
\begin{gather*}
\frac{1}{2}(1,0,0,0,1,0,1,0,0,1,1,0,0,0,1),\quad
\frac{1}{2}(1,0,1,0,0,0,0,0,1,1,1,0,0,0,1),\\
\frac{1}{2}(1,0,0,1,0,0,1,0,0,1,0,1,0,0,1),\quad
\frac{1}{2}(1,0,1,0,0,0,0,1,0,1,0,1,0,0,1),\\
\frac{1}{4}(2,1,0,0,1,0,1,1,0,2,0,1,1,0,2),\quad
\frac{1}{4}(2,0,1,1,0,1,0,0,1,2,0,1,1,0,2),\\
\frac{1}{2}(1,0,0,0,1,1,0,0,0,1,0,0,1,0,1),\quad
\frac{1}{2}(1,1,0,0,0,0,0,0,1,1,0,0,1,0,1),\\
\frac{1}{4}(2,0,1,0,1,1,0,1,0,2,1,0,0,1,2),\quad
\frac{1}{4}(2,1,0,1,0,0,1,0,1,2,1,0,0,1,2),\\
\frac{1}{2}(1,0,0,1,0,1,0,0,0,1,0,0,0,1,1),\quad
\frac{1}{2}(1,1,0,0,0,0,0,1,0,1,0,0,0,1,1).
\end{gather*}
The values of the objective function at these vertices are
\begin{gather*}
\frac{1}{2}\big(d_{12}+d_{16}+d_{24}+d_{34}+d_{35}+d_{56}\big),\quad
\frac{1}{2}\big(d_{12}+d_{14}+d_{26}+d_{34}+d_{35}+d_{56}\big),\\
\frac{1}{2}\big(d_{12}+d_{15}+d_{24}+d_{34}+d_{36}+d_{56}\big),\quad
\frac{1}{2}\big(d_{12}+d_{14}+d_{25}+d_{34}+d_{36}+d_{56}\big),\\
\frac{1}{4}\big(2d_{12}+d_{13}+d_{16}+d_{24}+d_{25}+2d_{34}+d_{36}+d_{45}+2d_{56}\big),\\
\frac{1}{4}\big(2d_{12}+d_{14}+d_{15}+d_{23}+d_{26}+2d_{34}+d_{36}+d_{45}+2d_{56}\big),\\
\frac{1}{2}\big(d_{12}+d_{16}+d_{23}+d_{34}+d_{45}+d_{56}\big),\quad
\frac{1}{2}\big(d_{12}+d_{13}+d_{26}+d_{34}+d_{45}+d_{56}\big),\\
\frac{1}{4}\big(2d_{12}+d_{14}+d_{16}+d_{23}+d_{25}+2d_{34}+d_{35}+d_{46}+2d_{56}\big),\\
\frac{1}{4}\big(2d_{12}+d_{13}+d_{15}+d_{24}+d_{26}+2d_{34}+d_{35}+d_{45}+2d_{56}\big),\\
\frac{1}{2}\big(d_{12}+d_{15}+d_{23}+d_{34}+d_{46}+d_{56}\big),\quad
\frac{1}{2}\big(d_{12}+d_{13}+d_{25}+d_{34}+d_{46}+d_{56}\big).
\end{gather*}
Notice that now we have four vertices, such that the common denominator of their coordinates equals $4$. They correspond to multi-tours of multiplicity two. The weight of minimal parametric filling of this type $G$ equals to the maximum of those twelve values (of twelve multi-perimeters of the corresponding multi-tours depicted in Figure~\ref{fig:six_tr}).

\ig{6tr}{0.35}{fig:six_tr}{
Multi-tours of the binary tree with six boundary vertices and three moustaches that appear in the formula of the weight of minimal parametric filling of this type.}

The polyhedra for the first and the second cases lie in the same space and have common vertices: $1$st, $2$nd, $5$th, and $6$th vertices of the first one coincide with the $7$th, $8$th, $11$th, and $12$th  vertices of the second one, respectively. This fact explains the appearance of  examples of open families of six-point spaces, each of which has minimal fillings of two different types. Such surprising examples are found firstly by Z.~Ovsysnnikov~\cite{Ovs}. Now their nature becomes clear: the distances in those spaces are chosen in such a way that the objective function attains maximum at a common vertex of both polyhedra. Then for sufficiently close metric spaces the maximum remains at the same vertex.

Generally, the studying of properties of the polyhedra $\L_G$ seems an interesting and perspective problem.

\begin{rk}
For the case of seven-point metric spaces, there are also two binary trees, one with two moustaches, and one with three moustaches. In the both cases we get $10$-dimensional polyhedra in $21$-dimensional space, in the first case the polyhedron has $16$ vertices, and in the second case the polyhedron has $32$ vertices. The multi-tours appear in the second case only, their multiplicities are $2$. We calculated the coordinates of the vertices, so it is easy to write down the explicit formulas for the weights of minimal parametric fillings in this case also as maximum over $16$ and over $32$ expressions, but we do not include them to save the place.
\end{rk}

\begin{rk}
For a fixed binary tree $G$ one can easily write down the matrix $A=C(G)$ and apply Assertion~\ref{ass:extreme_points} to find all the vertices of the polyhedron $\L_G$. The latter permits to write down the explicit formula for the weight of minimal parametric filling of the type $G$ of a metric space as maximum of linear functions on the distances in the space.
\end{rk}

\markright{References}

\end{document}